\theoremstyle{plain}
\newtheorem{theorem}{Theorem}[section]
\newtheorem{corollary}[theorem]{Corollary}
\newtheorem{lemma}[theorem]{Lemma}
\theoremstyle{definition}
\newtheorem{definition}[theorem]{Definition}
\newtheorem{remark}[theorem]{Remark}
\newtheorem{example}[theorem]{Example}
\newtheorem{proposition}[theorem]{Proposition}
\newcommand{\bN}{\ensuremath{\mathbb{N}}}
\newcommand{\bZ}{\ensuremath{\mathbb{Z}}}
\newcommand{\Hom}{\operatorname{Hom}}
\newcommand{\Set}{\operatorname{{\bf Set}}}
\newcommand{\Graph}{\operatorname{\bf Graph}}
\newcommand{\bij}{\operatorname{bij}}
\newcommand{\inj}{\operatorname{inj}}
\newcommand{\surj}{\operatorname{surj}}
\newcommand{\St}{\operatorname{{\bf St}}}
\newcommand{\kasen}{\underline{\chi}}
\newcommand{\maru}{\bullet}
\newcommand{\id}{\operatorname{id}}
\title{Chromatic functors of graphs}
\author{Masahiko Yoshinaga\thanks{Department of Mathematics, 
Hokkaido University, North 10, West 8, Kita-ku, 
Sapporo 060-0810, JAPAN 
E-mail: yoshinaga@math.sci.hokudai.ac.jp}}
\date{\today}
\begin{document}
\maketitle

\begin{abstract}
Finite graphs that have a common chromatic polynomial have the same number of regular $n$-colorings. 
A natural question is whether there exists a natural bijection between regular $n$-colorings. We address this question using a functorial formulation. Let $G$ be a simple graph. Then for each set $X$ we can associate a set of $X$-colorings. This defines a functor, ``chromatic functor'' from the category of sets with injections to itself. The first main result verifies that two finite graphs determine isomorphic chromatic functors if and only if they have the same chromatic polynomial.

Chromatic functors can be defined for arbitrary, possibly infinite, graphs. This fact enables us to investigate functorial chromatic theory for infinite graphs. We prove that chromatic functors satisfy the Cantor-Bernstein-Schr\"oder property. We also prove that countable connected trees determine isomorphic chromatic functors. Finally, we present a pair of infinite graphs that determine non-isomorphic chromatic functors.
%
\end{abstract}

\section{Introduction}
\label{sec:intro}

A simple graph $G=(V, E)$ consists of a set of vertices $V$ and a set of unoriented edges $E\subset 2^V$ that connect two vertices. A regular $n$-coloring of $G$ is a map $c:V\longrightarrow [n]=\{1, 2, \dots, n\}$ such that $\{v, v'\}\in E$ implies $c(v)\neq c(v')$. 

It is known that, for a finite graph $G=(V, E)$, there exists a polynomial $\chi(G, t)\in\bZ[t]$ such that the number of regular $n$-colorings equals $\chi(G, n)$ (\cite{bir, read}), where $\chi(G, t)$ is called the chromatic polynomial of $G$. The minimum positive integer $n$ satisfying $\chi(G, n)\neq 0$ is called the chromatic number of $G$ and denoted by $\chi(G)$. 

Two finite graphs $G_1$ and $G_2$ are said to be chromatically equivalent if $\chi(G_1, t)=\chi(G_2, t)$ (\cite{bir}). Non-isomorphic graphs may be chromatically equivalent, for example, if $G=(V, E)$ is a finite connected tree, then $\chi(G, t)=t(t-1)^{\#V-1}$. Thus, connected trees that have the prescribed $\#V$ are chromatically equivalent. Therefore, it is a natural question to ask whether there exists a certain bijection between the sets of regular $n$-colorings of chromatically equivalent graphs.

To address the above question, we introduce a functorial formulation. 
Let $X$ be a set. We consider $X$ to be a set of colors, and define the set of regular $X$-coloring by 
\begin{equation}
\label{eq:chfct}
\underline{\chi}(G, X):=
\{c:V\longrightarrow X\mid 
\{v_1, v_2\}\in E\Longrightarrow c(v_1)\neq c(v_2)\}. 
\end{equation}
Let $G_1$ and $G_2$ be graphs. 
If $\#\underline{\chi}(G_1, X)=\#\underline{\chi}(G_2, X)$, 
then there exists a bijection $\varphi_X$ 
\[
\varphi_X:
\underline{\chi}(G_1, X)
\stackrel{\simeq}{\longrightarrow}
\underline{\chi}(G_2, X). 
\]
However, if we consider $\underline{\chi}(G_1, X)$ as a functor with respect to $X$, we may impose functoriality. The natural bijection should be formulated as an isomorphism 
\begin{equation}
\label{eq:isomfct}
\varphi: 
\underline{\chi}(G_1, \bullet)
\stackrel{\simeq}{\longrightarrow}
\underline{\chi}(G_2, \bullet) 
\end{equation}
of functors (chromatic functors). If the chromatic functors are isomorphic in the sense of (\ref{eq:isomfct}), then by substituting $X=[n]$, we can conclude that the graphs are chromatically equivalent. The main result asserts that the converse also holds. 

\begin{theorem}
(Theorem \ref{thm:main}) \\
Let $G_1$ and $G_2$ be finite graphs. Then they are chromatically equivalent if and only if associated chromatic functors are isomorphic as functors $(\ref{eq:isomfct})$. 
\end{theorem}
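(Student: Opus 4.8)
The plan is to reduce the statement to a natural decomposition of the chromatic functor into representable functors indexed by partitions of the vertex set into independent sets.

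The easy direction comes first. If $\varphi:\underline{\chi}(G_1,\bullet)\stackrel{\simeq}{\longrightarrow}\underline{\chi}(G_2,\bullet)$ is a natural isomorphism, then evaluating at $X=[n]$ gives bijections $\underline{\chi}(G_1,[n])\cong\underline{\chi}(G_2,[n])$ for every $n$, hence $\chi(G_1,n)=\chi(G_2,n)$ for all positive integers $n$; two polynomials agreeing at infinitely many points coincide, so $G_1$ and $G_2$ are chromatically equivalent.

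For the converse, the key structural observation is that every proper coloring $c\in\underline{\chi}(G,X)$ factors canonically as its fiber partition together with an injection of color classes into $X$. Precisely, $c$ determines the partition $\pi(c)$ of $V$ whose blocks are the nonempty fibers $c^{-1}(x)$; properness is equivalent to each block being independent, and $c$ is recovered from $\pi(c)$ together with the induced injection from the set of blocks of $\pi(c)$ into $X$. First I would verify that $\pi(c)$ is unchanged when $c$ is post-composed with an injection $f\colon X\hookrightarrow Y$ — this uses injectivity of $f$ crucially — so that grouping colorings by their partition is natural in $X$. Writing $B_\pi$ for the set of blocks of $\pi$, this yields a natural isomorphism
\[
\underline{\chi}(G,\bullet)\;\cong\;\bigsqcup_{\pi}\operatorname{Inj}(B_\pi,\bullet),
\]
where the coproduct runs over all partitions $\pi$ of $V$ into independent sets, and $\operatorname{Inj}(S,\bullet)$ is the functor of injections out of $S$, i.e.\ the representable functor $\Hom(S,\bullet)$ in the category of sets with injections. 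Collecting terms by block count and choosing, once per $\pi$, a bijection $B_\pi\cong[k]$ with $k=|\pi|$, I would identify $\operatorname{Inj}(B_\pi,\bullet)$ with $\operatorname{Inj}([k],\bullet)$ and obtain
\[
\underline{\chi}(G,\bullet)\;\cong\;\bigsqcup_{k\ge 0}\operatorname{Inj}([k],\bullet)^{\sqcup a_k(G)},
\]
where $a_k(G)$ is the number of partitions of $V$ into exactly $k$ independent sets. Counting injections recovers the classical identity $\chi(G,t)=\sum_k a_k(G)\,(t)_k$ with $(t)_k=t(t-1)\cdots(t-k+1)$; since the falling factorials form a basis of $\bZ[t]$, chromatic equivalence is equivalent to $a_k(G_1)=a_k(G_2)$ for all $k$. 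Under this equality the two coproduct decompositions have identical summands with identical multiplicities, and matching them term by term produces the desired natural isomorphism $(\ref{eq:isomfct})$.

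I expect the main obstacle to be the naturality verification in the decomposition step — checking that the partition-and-injection factorization commutes with the functorial maps induced by injections of color sets. Everything downstream (the falling-factorial expansion and the term-by-term matching of coproducts) is then bookkeeping. One point deserving care is that the reindexing bijections $B_\pi\cong[k]$ are chosen once per partition and independently of $X$, so they do not disturb naturality in $X$.
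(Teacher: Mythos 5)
Your argument is essentially identical to the paper's: your decomposition $\underline{\chi}(G,\bullet)\cong\bigsqcup_\pi\operatorname{Inj}(B_\pi,\bullet)$ over partitions into independent sets is exactly the paper's Proposition \ref{prop:stablepart} (stable partitions, with $\operatorname{Inj}(B_\pi,\bullet)=\underline{\chi}(K_{B_\pi},\bullet)$), and the subsequent grouping by block count, the falling-factorial expansion, the linear-independence argument, and the term-by-term matching all coincide with the paper's proof in \S\ref{sec:stablepart}. The proposal is correct.
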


The advantage of chromatic functors over polynomials is that we can define chromatic functors also for graphs with infinite vertices. We say graphs $G_1$ and $G_2$, which are possibly infinite graphs, are \emph{chromatically equivalent} if associated chromatic functors are isomorphic in the sense of (\ref{eq:isomfct}). We will show several results on chromatic functors for infinite graphs in \S \ref{sec:infinitegraphs}. The first result is the following Cantor-Bernstein-Schr\"oder property. 

\begin{theorem}
(Theorem \ref{thm:cbs}) Let $G_1$ and $G_2$ be graphs. Suppose there exist surjective graph homomorphisms $G_1\twoheadrightarrow G_2$ and 
$G_2\twoheadrightarrow G_1$. Then $G_1$ and $G_2$ are chromatically equivalent.
\end{theorem}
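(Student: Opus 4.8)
The plan is to reduce the categorical statement to the ordinary Cantor--Bernstein--Schr\"oder theorem for cardinals, by exploiting the decomposition of a chromatic functor into ``atomic'' pieces. Recall that any coloring $c\in\underline{\chi}(G,X)$ factors uniquely as $V\twoheadrightarrow P(c)\hookrightarrow X$, where $P(c)$ is the partition of $V$ into the fibers (color classes) of $c$; regularity of $c$ is equivalent to $P(c)$ being \emph{proper}, meaning that no edge of $G$ joins two vertices lying in the same block. Sending $c$ to the pair consisting of $P(c)$ and the induced injection $\overline{c}:P(c)\hookrightarrow X$ yields a natural isomorphism
\[
\underline{\chi}(G,\bullet)\;\cong\;\coprod_{P\text{ proper}}\inj(P,\bullet),
\]
where $\inj(S,\bullet)$ denotes the functor $X\mapsto\{\text{injections }S\hookrightarrow X\}$; naturality is immediate because post-composition with an injection $X\hookrightarrow Y$ does not alter the fiber partition. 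Since $\inj(S,\bullet)$ depends only on $\#S$, this groups into $\coprod_{\kappa}\inj(\kappa,\bullet)^{(A_\kappa(G))}$, where $A_\kappa(G)$ denotes the \emph{set} of proper partitions of $V$ into exactly $\kappa$ blocks.

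The first step is to show that a surjective graph homomorphism $p:G_1\twoheadrightarrow G_2$ induces, for every cardinal $\kappa$, an injection $A_\kappa(G_2)\hookrightarrow A_\kappa(G_1)$, via the preimage operation $P\mapsto p^{-1}(P)$ on partitions. Because $p$ is a homomorphism into a simple graph, it separates the endpoints of each edge, so $p^{-1}(P)$ is proper whenever $P$ is; because $p$ is surjective, the assignment ${\rm block}\mapsto{\rm preimage}$ is a bijection $P\to p^{-1}(P)$, so the number of blocks is preserved and $P\mapsto p^{-1}(P)$ is injective. (Functorially, this is exactly the statement that the contravariant natural transformation $\underline{\chi}(p,\bullet):\underline{\chi}(G_2,\bullet)\to\underline{\chi}(G_1,\bullet)$, $c\mapsto c\circ p$, is a componentwise monomorphism carrying the summand indexed by $P$ isomorphically onto the summand indexed by $p^{-1}(P)$.)

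Applying this to both $p$ and the given $q:G_2\twoheadrightarrow G_1$ produces injections $A_\kappa(G_2)\hookrightarrow A_\kappa(G_1)$ and $A_\kappa(G_1)\hookrightarrow A_\kappa(G_2)$ for every $\kappa$. The ordinary Cantor--Bernstein--Schr\"oder theorem then furnishes a bijection $\beta_\kappa:A_\kappa(G_1)\xrightarrow{\;\simeq\;}A_\kappa(G_2)$ for each $\kappa$. Finally I would assemble these into a natural isomorphism: for each proper partition $P$ of $G_1$ choose a bijection of block-sets $P\to\beta_\kappa(P)$ (possible since both have $\kappa$ blocks), which induces $\inj(P,\bullet)\cong\inj(\beta_\kappa(P),\bullet)$ by precomposition, and take the coproduct over all $P$. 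Naturality in the color variable is automatic, as every map involved is precomposition with a fixed bijection.

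The point requiring the most care --- and the conceptual heart of the argument --- is that one cannot simply invoke an abstract Cantor--Bernstein--Schr\"oder principle inside the functor category, since such a principle fails for general functors. What makes it work here is the rigid coproduct decomposition into the atoms $\inj(\kappa,\bullet)$, together with the observation that a surjective homomorphism respects this decomposition summand-by-summand while preserving the weight $\kappa$; this is precisely what converts the categorical problem into the set-theoretic Cantor--Bernstein--Schr\"oder theorem applied to each $A_\kappa$. I would therefore concentrate the effort on verifying the three properties of $p^{-1}$ (properness, preservation of block-count, injectivity) and the naturality of the reassembled isomorphism, both of which are routine once the decomposition is in hand.
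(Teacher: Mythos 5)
Your proof is correct, but it follows a genuinely different route from the paper's own proof of Theorem \ref{thm:cbs}; in fact it coincides with the alternative argument sketched in Remark \ref{rem:hasebe1}. The paper's proof works directly with the commutative squares formed by $\varphi^*$, $\psi^*$ and $f_*$ for an arbitrary injection $f:X\hookrightarrow Y$, observes that all maps involved are injective and that the squares are fiber products, and invokes a ``relative'' Cantor--Bernstein--Schr\"oder theorem for such squares (Theorem \ref{thm:relCBS}); the bijections $r_X$ it produces are compatible with every $f_*$ at once, so naturality of the resulting isomorphism is automatic. You instead use the decomposition $\kasen(G,\maru)\simeq\bigsqcup_{\Pi\in\St(G)}\kasen(K_\Pi,\maru)$ (Proposition \ref{prop:stablepart}), show that a surjective homomorphism pulls back stable partitions injectively while preserving the number of blocks, apply the classical CBS theorem to each $\St_\kappa$ separately, and reassemble. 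Your route is more transparent in that it reduces everything to CBS on the index sets and makes explicit that the isomorphism class of $\kasen(G,\maru)$ is governed by the cardinalities $\#\St_\kappa(G)$ (the same mechanism the paper exploits later in Theorem \ref{thm:finchrnum}); the price is that you must choose a block bijection $\Pi\to\beta_\kappa(\Pi)$ for each $\Pi$ and check naturality of the reassembled map by hand, both of which you correctly identify as routine. The paper's route yields a self-contained set-theoretic lemma of independent interest and delivers the natural isomorphism without passing through the partition decomposition. Your verification of the three properties of the preimage operation (properness, block-count preservation via surjectivity, injectivity) is precisely what Remark \ref{rem:hasebe1} leaves implicit, so your write-up usefully fills in that sketch.
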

As a corollary to the above theorem, we can show that connected unbounded countable trees determine isomorphic chromatic functors, which can be considered as a generalization of the result on the chromatic polynomial of connected finite trees. 

In \S \ref{sec:finchrnum}, we will characterize isomorphism classes of chromatic functors for infinite graphs with finite chromatic numbers. Roughly speaking, the isomorphism class of the functor $\underline{\chi}(G,\bullet)$ is characterized by the chromatic number $\chi(G)$ and cardinality $\#\underline{\chi}(G,[\chi(G)])$ if $G$ is a countable infinite graph with $\chi(G)<\infty$ (Theorem \ref{thm:finchrnum}). 

If $\chi(G)=\infty$, the chromatic functors detect more subtle information. In \S \ref{sec:example}, we exhibit two graphs $G_1$ and $G_2$ which satisfy 
\begin{itemize}
\item 
$\chi(G_1)=\chi(G_2)$, 
\item 
$
\#\underline{\chi}(G_1, [\chi(G_1)])=
\#\underline{\chi}(G_2, [\chi(G_2)])$, however, 
\item 
$G_1$ and $G_2$ are not chromatically equivalent. 
\end{itemize}

\section{Chromatic functors}

\label{sec:chromatic}

\subsection{Definitions and the main result}

Let $G_1=(V_1, E_1)$ and $G_2=(V_2, E_2)$ be simple graphs. A homomorphism $\varphi:G_1\longrightarrow G_2$ is a map $\varphi:V_1\longrightarrow V_2$ such that $\{v_1, v_2\}\in E_1\Longrightarrow \{\varphi(v_1), \varphi(v_2)\}\in E_2$. A homomorphism $\varphi:G_1\longrightarrow G_2$ is said to be surjective if the map of vertices $\varphi:V_1\longrightarrow V_2$ is surjective. We denote the category of graphs and homomorphisms, or surjective homomorphisms, by $\Graph$, or $\Graph^{\surj}$. 

Let $G=(V, E)$ be a graph and $X$ a set. Define the set $\underline{\chi}(G, X)\subset X^V$ by 
\begin{equation}
\underline{\chi}(G, X):=
\{c:V\longrightarrow X \mid 
\{v_1, v_2\}\in E\Longrightarrow c(v_1)\neq c(v_2)\}. 
\end{equation}
This defines a bi-functor 
\begin{equation}
\underline{\chi}:
\Graph\times\Set^{\inj}\longrightarrow\Set, \ 
(G, X)\longmapsto 
\underline{\chi}(G, X), 
\end{equation}
where $\Set$ (resp. $\Set^{\inj}$) is the category of sets and maps (resp. injections). Note that $\underline{\chi}(G, X)$ is covariant with respect to $X$ and contravariant with respect to $G$. If we restrict the functor to $\Graph^{\surj}$, then we have a bi-functor 
\[
\underline{\chi}: \Graph^{\surj}\times\Set^{\inj}
\longrightarrow
\Set^{\inj}. 
\]
These functors can also be formulated in terms of graph homomorphisms (\cite{koz}). Let us denote the complete graph with the set of vertices $X$ by $K_X$. Then, there is a natural isomorphism 
\[
\underline{\chi}(G, X)\simeq
\Hom_{\Graph}(G, K_X). 
\]

Let $G$ be a simple graph. Then $G$ determines a functor 
\[
\kasen(G, \maru): \Set^{\inj}\longrightarrow\Set^{\inj}, 
\]
which we call \emph{the chromatic functor} associated to $G$. If we restrict the chromatic functor to the category $\Set^{\bij}$ of sets and bijections, we obtain 
\[
\kasen(G, \maru): \Set^{\bij}\longrightarrow\Set^{\bij}. 
\]
Hence, the cardinality $\#\kasen(G, X)$ depends only on the cardinality $\#X$. As mentioned in \S \ref{sec:intro}, for a finite graph $G$, there exists a polynomial $\chi(G, t)\in\bZ[t]$ such that 
\[
\#\kasen(G, [n])=\chi(G, n), 
\]
for $n>0$, which is called the chromatic polynomial of $G$. If $\kasen(G_1, \maru)\simeq\kasen(G_2, \maru)$ as functors $\Set^{\inj}\longrightarrow\Set^{\inj}$, then obviously, $\sharp\kasen(G_1, [n])=\sharp\kasen(G_2, [n])$. In particular, they have the same chromatic polynomial. The following result asserts that the converse is also true. 

\begin{theorem}
\label{thm:main}
Let $G_1$ and $G_2$ be finite graphs. Then the following are equivalent. 
\begin{itemize}
\item[(i)] 
$\chi(G_1, t)=\chi(G_2, t)$. 
\item[(ii)] 
$\kasen(G_1, \maru)\simeq\kasen(G_2, \maru)$ as 
functors $\Set^{\inj}\longrightarrow\Set^{\inj}$. 
\end{itemize}
\end{theorem}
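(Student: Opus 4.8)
The implication (ii)$\Rightarrow$(i) is essentially formal, and is already indicated in the text: an isomorphism of functors restricts to a bijection $\kasen(G_1,[n])\simeq\kasen(G_2,[n])$ for each $n$, so $\chi(G_1,n)=\chi(G_2,n)$ for all positive integers $n$, and two polynomials agreeing at infinitely many points coincide. The substance of the theorem is therefore (i)$\Rightarrow$(ii), and my plan is to exhibit an explicit natural isomorphism by first decomposing each chromatic functor into elementary pieces.

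The key observation I would start from is that a proper coloring $c\colon V\to X$ is precisely the following data: the partition $\pi_c$ of $V$ into the fibers of $c$, together with the injection $\bar c$ from the set of fibers into $X$ induced by $c$. Because $c$ is proper, every fiber is an independent set (it contains no edge), and conversely any partition of $V$ into nonempty independent sets, equipped with an injection of its blocks into $X$, reassembles a unique proper coloring. Call a partition of $V$ into nonempty independent sets a proper partition; write $P_k(G)$ for the set of proper partitions with exactly $k$ blocks and $a_k(G):=\#P_k(G)$. The assignment $c\mapsto(\pi_c,\bar c)$ then gives a bijection
\[
\kasen(G,X)\;\simeq\;\coprod_{k\ge 0}\ \coprod_{\pi\in P_k(G)}\Hom_{\Set^{\inj}}(B(\pi),X),
\]
where $B(\pi)$ denotes the $k$-element set of blocks of $\pi$.

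The crucial point, which I would verify next, is that this bijection is natural in $X$: if $f\colon X\hookrightarrow Y$ is an injection then $f\circ c$ has exactly the same fibers as $c$ (here injectivity of $f$ is essential), so $\pi_{f\circ c}=\pi_c$ and $\overline{f\circ c}=f\circ\bar c$, and the decomposition commutes with the induced maps. Fixing for each $\pi\in P_k(G)$ a bijection $B(\pi)\simeq[k]$ identifies $\Hom_{\Set^{\inj}}(B(\pi),\maru)$ with the single functor $J_k:=\Hom_{\Set^{\inj}}([k],\maru)$, which depends only on $k$. Hence, as functors $\Set^{\inj}\to\Set^{\inj}$,
\[
\kasen(G,\maru)\;\simeq\;\coprod_{k\ge 0}a_k(G)\cdot J_k,
\]
the coproduct of $a_k(G)$ copies of the elementary injection functor $J_k$.

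Finally I would recover the multiplicities from the chromatic polynomial. Evaluating at $X=[n]$ gives $\#J_k([n])=(n)_k:=n(n-1)\cdots(n-k+1)$, so $\chi(G,t)=\sum_{k\ge 0}a_k(G)\,(t)_k$. Since the falling factorials $\{(t)_k\}_{k\ge0}$ form a basis of the polynomial ring, the hypothesis $\chi(G_1,t)=\chi(G_2,t)$ forces $a_k(G_1)=a_k(G_2)$ for every $k$; combining this with the displayed decomposition yields $\kasen(G_1,\maru)\simeq\coprod_k a_k(G_1)\cdot J_k=\coprod_k a_k(G_2)\cdot J_k\simeq\kasen(G_2,\maru)$, which is (ii). I expect the main obstacle to be the naturality step: all the combinatorics (the falling-factorial expansion and its linear independence) is standard, so the real work lies in checking carefully that the fiber decomposition is compatible with the functorial maps, i.e.\ that post-composition with an injection preserves fibers and therefore respects the splitting into injection functors. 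Packaging this as a genuine natural isomorphism of $\Set^{\inj}$-valued functors is the heart of the argument.
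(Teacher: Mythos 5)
Your proposal is correct and follows essentially the same route as the paper: your ``proper partitions'' are exactly the paper's stable partitions, your functor $J_k=\Hom_{\Set^{\inj}}([k],\maru)$ is exactly $\kasen(K_k,\maru)$, and the fiber decomposition, its naturality under postcomposition with injections, and the linear independence of the falling factorials are precisely the steps in the paper's proof of Proposition \ref{prop:stablepart} and Theorem \ref{thm:main}. No gaps.
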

Theorem \ref{thm:main} will be proved later in \S \ref{sec:stablepart}. 

\begin{definition}
Let $G_1$ and $G_2$ be (possibly infinite) graphs. $G_1$ and $G_2$ are said to be \emph{chromatically equivalent} if 
\[
\kasen(G_1, \maru)\simeq\kasen(G_2, \maru), 
\]
as functors $\Set^{\inj}\longrightarrow\Set^{\inj}$. 
\end{definition}
Note that for finite graphs, by Theorem \ref{thm:main}, this definition is equivalent to the classical definition of chromatic equivalence \cite{bir}. 

\begin{remark}
The functorial formulation in this paper is related to the notion of ``combinatorial species'' \cite{joy, bll}, which considers a combinatorial object as a functor $\Set^{\bij}\longrightarrow\Set^{\bij}$. However, we consider the category $\Set^{\inj}$ instead of $\Set^{\bij}$. This requires stronger functoriality than combinatorial species. 
\end{remark}

\subsection{Stable partitions of chromatic functors}

\label{sec:stablepart}

Let $G=(V, E)$ be a simple graph. 

\begin{definition}
$\Pi=\{\pi_\lambda\mid \lambda\in\Lambda\}$ is called a \emph{stable partition} of $G$ if 
\begin{itemize}
\item 
$\bigsqcup\Pi:=\bigsqcup_{\lambda\in\Lambda}\pi_\lambda=V$, and 
\item 
$\{v_1, v_2\}\in E, v_1\in\pi_{\lambda_1}, v_2\in\pi_{\lambda_2}
\Longrightarrow \pi_{\lambda_1}\neq\pi_{\lambda_2}$. 
(In other words, $v_1, v_2\in\pi_\lambda
\Longrightarrow\{v_1, v_2\}\notin E$.)
\end{itemize}
\end{definition}
We denote the set of all stable partitions by 
\[
\St(G)=\{\Pi\mid\mbox{ $\Pi$ is a stable partition of $G$}\}. 
\]

\begin{example}
Let $X$ be a set. There are two extremal partitions: the trivial partition $\{X\}$ and the finest partition $\widetilde{X}:=\{\{x\}\mid x\in X\}$ into single elements. The stable partition $\St(K_X)$ of the complete graph $K_X$ consists of a single partition 
\[
\St(K_X)=\{\widetilde{X}\}. 
\]
\end{example}

\begin{proposition}
\label{prop:stablepart}
Let $G=(V, E)$ be a (possibly infinite) graph. Then 
\begin{equation}
\label{eq:stpartfct}
\kasen(G, \maru)\simeq
\bigsqcup_{\Pi\in\St(G)}
\kasen(K_\Pi, \maru). 
\end{equation}
\end{proposition}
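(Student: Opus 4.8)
The plan is to construct, for each set $X$, an explicit bijection between $\kasen(G, X)$ and the disjoint union on the right-hand side of $(\ref{eq:stpartfct})$, and then to verify that this bijection is natural with respect to injections of color sets. The guiding idea is that a proper $X$-coloring is the same data as a stable partition of $G$ together with an injective assignment of colors to its blocks.

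First I would associate to each coloring $c \in \kasen(G, X)$ its \emph{fiber partition}
\[
\Pi(c) := \{c^{-1}(x) \mid x \in c(V)\},
\]
consisting of the nonempty fibers of $c$. This is a partition of $V$, and it is stable: if $v_1, v_2$ lie in the same fiber then $c(v_1) = c(v_2)$, so $\{v_1, v_2\} \notin E$ because $c$ is a proper coloring. Hence $\Pi(c) \in \St(G)$. Since $K_{\Pi(c)}$ is the complete graph on the set of blocks, an element of $\kasen(K_{\Pi(c)}, X)$ is precisely an injection of $\Pi(c)$ into $X$; the coloring $c$ determines such an injection by sending each block to the common color of its elements, and this assignment is injective exactly because distinct blocks are distinct fibers and hence carry distinct colors. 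This defines a map $\Phi_X : \kasen(G, X) \longrightarrow \bigsqcup_{\Pi \in \St(G)} \kasen(K_\Pi, X)$.

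Next I would exhibit the inverse. Given $\Pi \in \St(G)$ and an injection $\iota : \Pi \hookrightarrow X$, define $c_\iota : V \to X$ by $c_\iota(v) = \iota(\pi)$ for the unique block $\pi \ni v$. Stability of $\Pi$ forces adjacent vertices into distinct blocks, and injectivity of $\iota$ then forces them to receive distinct colors, so $c_\iota \in \kasen(G, X)$; moreover its fiber partition is exactly $\Pi$. Checking that $c \mapsto \Phi_X(c)$ and $(\Pi, \iota) \mapsto c_\iota$ are mutually inverse is routine, yielding a bijection for each $X$.

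The step that actually requires care, and where the hypothesis that we work over $\Set^{\inj}$ is essential, is naturality. For an injection $f : X \hookrightarrow Y$, the functor sends $c$ to $f \circ c$, and I would verify that injectivity of $f$ gives $(f \circ c)^{-1}(f(x)) = c^{-1}(x)$, so that $\Pi(f \circ c) = \Pi(c)$: post-composition with an injection collapses no fibers and therefore preserves the indexing stable partition. Within the component indexed by a fixed $\Pi$, the induced map is simply $\iota \mapsto f \circ \iota$, which is exactly $\kasen(K_\Pi, f)$, so the naturality square commutes. I expect this to be the main point only in the sense that it is precisely here that injectivity must be exploited: had $f$ been an arbitrary map, distinct colors could merge, the fiber partition would coarsen, and the block-by-block decomposition would fail to be respected.
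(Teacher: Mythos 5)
Your proposal is correct and follows essentially the same route as the paper: both decompose a proper coloring into its fiber (stable) partition together with an injection of the blocks into the color set, i.e.\ an element of $\kasen(K_{\Pi}, X)$, and check the two assignments are mutually inverse. Your explicit verification of naturality under injections is slightly more detailed than the paper's, which leaves that step implicit, but the underlying construction is identical.
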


\begin{proof}
Let $X$ be a set and $c:V\longrightarrow X$ a regular $X$-coloring. The coloring $c$ determines a stable partition 
\[
\Pi_c:=\{c^{-1}(x)\mid x\in X, c^{-1}(x)\neq\emptyset\}, 
\]
of $G$. This partition determines a surjective graph homomorphism 
\begin{equation}
\label{eq:partitionhom}
\varphi_{\Pi_c}:G\longrightarrow K_{\Pi_c}, 
\end{equation}
which sends $v\in V$ to $c^{-1}(c(v))\in\Pi_c$. By construction, the coloring $c:G\longrightarrow K_X$ factors through (\ref{eq:partitionhom}). Therefore, we have a map 
\[
\kasen(G, X)\longrightarrow
\bigsqcup_{\Pi\in\St(G)}
\kasen(K_{\Pi}, X). 
\]
Conversely, for any partition $\Pi\in\St(G)$, there is a surjective graph homomorphism $G\longrightarrow K_{\Pi}$. Because $\kasen$ is contravariant with respect to the graphs, we have $\kasen(G, X)\longleftarrow\kasen(K_{\Pi}, X)$. This induces 
\[
\kasen(G, X)\longleftarrow
\bigsqcup_{\Pi\in\St(G)}
\kasen(K_{\Pi}, X). 
\]
It is apparent that these maps are inverse to each other. 
\end{proof}

Now, let us prove (i) $\Longrightarrow$ (ii) of Theorem \ref{thm:main}. Let $G=(V, E)$ be a finite graph. Then there are only finitely many stable partitions. We decompose $\St(G)$ as 
\[
\St(G)=\bigsqcup_{k\geq 1}\St_k(G), 
\]
where, 
\[
\St_k(G)=\{\Pi\in\St(G)\mid \#\Pi=k\}. 
\]
Using Proposition \ref{prop:stablepart}, we have 
\begin{equation}
\label{eq:findecomp}
\kasen(G, \maru)\simeq
\bigsqcup_{k\geq 1}
\kasen(K_k, \maru)^{\sqcup\#\St_k(G)}. 
\end{equation}
This implies 
\begin{equation}
\label{eq:chrpolydec}
\chi(G, t)=\sum_{k\geq 1}\#\St_k(G)\cdot\chi(K_k, t). 
\end{equation}
Recall that $\chi(K_k, t)=t(t-1)\dots(t-k+1)$. Hence, the polynomials $\chi(K_k, t), (k\geq 1)$ are linearly independent. Therefore, from (\ref{eq:chrpolydec}), the number $\#\St_k(G)$ is uniquely determined by the chromatic polynomial $\chi(G, t)$. 

Now assume that two graphs $G_1$ and $G_2$ satisfy $\chi(G_1, t)=\chi(G_2, t)$. It follows that \#$\St_k(G_1)=\#\St_k(G_2)$ for any $k\geq 1$. Then using (\ref{eq:findecomp}), we may conclude that the associated chromatic functors are isomorphic. This completes the proof of Theorem \ref{thm:main}. 

\subsection{Uniqueness of stable partitions}

\label{sec:uniqueness}

We prove the uniqueness of the stable partition. The following is essentially the Yoneda lemma. 

\begin{lemma}
\label{lem:yoneda}
Let $X$ and $Y$ be sets. Suppose that 
\[
\widetilde{\alpha}:
\kasen(K_X, \maru)
\stackrel{\simeq}{\longrightarrow}
\kasen(K_Y, \maru)
\]
is an isomorphism of functors. Then $\widetilde{\alpha}$ is induced from a bijection $\beta:Y\stackrel{\simeq}{\longrightarrow}X$, that is, $\widetilde{\alpha}=\beta^*$. 
\end{lemma}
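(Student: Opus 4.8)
The plan is to recognize $\kasen(K_X,\maru)$ as a representable functor and then run the Yoneda lemma. First I would observe that a coloring $c:X\to Z$ of the complete graph $K_X$ is regular precisely when it is injective, so that
\[
\kasen(K_X,Z)=\inj(X,Z)=\Hom_{\Set^{\inj}}(X,Z),
\]
naturally in $Z\in\Set^{\inj}$, where the functorial action sends an injection $f:Z\to W$ to postcomposition by $f$. Thus $\kasen(K_X,\maru)$ is exactly the covariant functor on $\Set^{\inj}$ represented by $X$, and likewise $\kasen(K_Y,\maru)$ is represented by $Y$. The statement to be proved then becomes the assertion that an isomorphism between two such representable functors is induced by an isomorphism of the representing objects, i.e.\ the fully faithful, object-reflecting part of the Yoneda embedding specialized to the category $\Set^{\inj}$.

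Next I would extract $\beta$ by the usual Yoneda evaluation. Applying the component $\widetilde{\alpha}_X:\inj(X,X)\to\inj(Y,X)$ to the identity, set $\beta:=\widetilde{\alpha}_X(\id_X)\in\inj(Y,X)$; this is automatically an injection, being an element of $\kasen(K_Y,X)=\inj(Y,X)$. For any injection $c:X\to Z$, naturality of $\widetilde{\alpha}$ with respect to the morphism $c$ of $\Set^{\inj}$ yields a commuting square, and chasing $\id_X$ around it gives $\widetilde{\alpha}_Z(c)=c\circ\beta$. Hence $\widetilde{\alpha}=\beta^*$, precomposition by $\beta$.

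Finally I would upgrade $\beta$ from an injection to a bijection using invertibility. Since $\widetilde{\alpha}$ is an isomorphism of functors, the same construction applied to $\widetilde{\alpha}^{-1}$ produces an injection $\gamma:X\to Y$ with $\widetilde{\alpha}^{-1}=\gamma^*$. Composing, $\gamma^*\circ\beta^*=(\beta\circ\gamma)^*$ is the identity transformation, and evaluating at $\id_X$ forces $\beta\circ\gamma=\id_X$; symmetrically $\gamma\circ\beta=\id_Y$. Therefore $\beta$ and $\gamma$ are mutually inverse bijections, and $\widetilde{\alpha}=\beta^*$ is induced from a bijection $\beta:Y\stackrel{\simeq}{\to}X$ as claimed.

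The argument is formally a clean instance of Yoneda, so there is no deep obstacle; the only point that requires care is the bookkeeping of variance. Because $\kasen(K_{\maru},\maru)$ is contravariant in the graph, the representing object is the vertex set, the recovered map runs $Y\to X$ (not $X\to Y$), and $\beta^*$ is precomposition rather than postcomposition. The sole substantive input beyond formal Yoneda is the elementary remark that in $\Set^{\inj}$ an injection admitting a two-sided inverse is a bijection, which is what makes the passage from a natural isomorphism of functors to a genuine bijection of the representing sets automatic.
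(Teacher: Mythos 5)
Your proposal is correct and follows essentially the same route as the paper: both evaluate the natural transformation at $\id_X$ to extract $\beta$, use naturality to show $\widetilde{\alpha}=\beta^*$, and apply the same construction to $\widetilde{\alpha}^{-1}$ to obtain the two-sided inverse $\gamma$. The paper itself notes the lemma ``is essentially the Yoneda lemma,'' so your explicit framing via representability adds clarity but no new content.
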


\begin{proof}
We denote the image of $\id_X\in\kasen(K_X, X)$ by the map $\widetilde{\alpha}_X:\kasen(K_X, X)\stackrel{\simeq}{\longrightarrow}\kasen(K_Y, X)$ by $\beta:=\widetilde{\alpha}_X(\id_X)$. Similarly, set $\gamma:=\widetilde{\alpha}_Y^{-1}(\id_Y)\in\kasen(K_X, Y)$. Let $c\kasen(K_X, Z)$ be an arbitrary coloring. By definition, $c:X\hookrightarrow Z$ is an injection. By the commutativity of the following diagram, 
\[
\begin{CD}
\kasen(K_X, X)@>\widetilde{\alpha}_X>> \kasen(K_Y, X)\\
@Vc_*VV @VVc_*V\\
\kasen(K_X, Z)@>\widetilde{\alpha}_Z>> \kasen(K_Y, Z), 
\end{CD}
\]
we have 
\[
\widetilde{\alpha}_Z(c)=
\widetilde{\alpha}_Z(c_*(\id_X))=
c_*(\widetilde{\alpha}_X(\id_X))=
c_*(\beta)=c\circ\beta=\beta^*(c). 
\]
Similarly, we have $\widetilde{\alpha}^{-1}=\gamma^*$. Using the same diagram with $Z=Y$ and $c=\gamma$, we have $\gamma\circ\beta=\id_Y$. Similarly, $\beta\circ\gamma=\id_X$. Thus, $\beta$ and $\gamma$ are bijective. 
\end{proof}
Let $X=\{x\mid x\in X\}$ and $Y=\{y\mid y\in Y\}$ be partitions of $\bigsqcup X:=\bigsqcup_{x\in X}x$ and $\bigsqcup Y:=\bigsqcup_{y\in Y}y$ respectively. Then $X$ determines a functor 
\[
F_X:=\bigsqcup_{x\in X}\kasen(K_x, \bullet): 
\Set^{\inj}\longrightarrow\Set^{\inj}. 
\]
Suppose there exists a bijection $\alpha:\bigsqcup X\stackrel{\simeq}{\longrightarrow}\bigsqcup Y$ that preserves partitions, that is, the restriction $\alpha|_x$ induces a bijection $\alpha|_x:x\stackrel{\simeq}{\longrightarrow}y$ for some $y\in Y$. Then clearly $\alpha$ induces an isomorphism of functors 
\[
\widetilde{\alpha}:F_X\stackrel{\simeq}{\longrightarrow}
F_Y. 
\]
The next result asserts that the converse also holds. 

\begin{proposition}
\label{prop:uniqueness}
Let $\widetilde{\varphi}: F_X\stackrel{\simeq}{\longrightarrow}F_Y$ be an isomorphism of functors. Then there exists a bijection $\varphi:\bigsqcup X\stackrel{\simeq}{\longrightarrow}\bigsqcup Y$ that preserves partitions. 
\end{proposition}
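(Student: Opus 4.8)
The plan is to recover the blocks of the two partitions as the indecomposable coproduct summands of the functors $F_X$ and $F_Y$, and then to match them using $\widetilde{\varphi}$ together with the Yoneda lemma (Lemma \ref{lem:yoneda}). The starting observation is that $\kasen(K_x, Z)$ is exactly the set of injections $x\hookrightarrow Z$, so $\kasen(K_x, \bullet)=\Hom_{\Set^{\inj}}(x, \bullet)$ is the covariant functor represented by the block $x$, and
\[
F_X(Z)=\bigsqcup_{x\in X}\{c:x\hookrightarrow Z\},
\]
each element being a tagged pair $(x, c)$. In each summand I would single out the generator $g_x:=(x,\id_x)\in F_X(x)$; since every element $(x,c)$ equals the pushforward $F_X(c)(g_x)$, the summand $\kasen(K_x,\bullet)$ is precisely the subfunctor of $F_X$ generated by $g_x$.

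First I would characterize the blocks intrinsically. Call $s\in F(Z)$ \emph{minimal} if it does not lie in the image of $F(f)$ for any non-surjective injection $f:W\hookrightarrow Z$. A direct check shows that $(x,c)\in F_X(Z)$ is minimal exactly when $c:x\to Z$ is a bijection, and that such an element generates the subfunctor $\kasen(K_x,\bullet)$: as $g$ ranges over injections out of $Z$, the composites $g\circ c$ range over all injections out of $x$. Hence the subfunctors of $F_X$ generated by minimal elements are exactly the summands $\{\kasen(K_x,\bullet)\}_{x\in X}$, and distinct blocks give distinct subfunctors because the elements of $\kasen(K_x,\bullet)\subseteq F_X$ are precisely those carrying the label $x$. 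Thus $X$ is canonically identified with the set of minimal-element-generated subfunctors of $F_X$, and likewise $Y$ with those of $F_Y$.

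The key step is to transport this canonical data across $\widetilde{\varphi}$. Minimality is preserved by a natural isomorphism: if $\widetilde{\varphi}_Z(s)=F_Y(f)(t)$ for a non-surjective $f:W\hookrightarrow Z$, then naturality gives $s=F_X(f)(\widetilde{\varphi}_W^{-1}(t))$, contradicting minimality of $s$. Therefore $\widetilde{\varphi}$ carries minimal elements to minimal elements and the subfunctor generated by $s$ to the one generated by $\widetilde{\varphi}(s)$, so it induces a bijection $\sigma:X\stackrel{\simeq}{\longrightarrow}Y$ of the two sets of summands and restricts to an isomorphism of functors $\kasen(K_x,\bullet)\stackrel{\simeq}{\longrightarrow}\kasen(K_{\sigma(x)},\bullet)$ for each $x$. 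By Lemma \ref{lem:yoneda}, each such isomorphism is induced by a bijection $\beta_x:\sigma(x)\stackrel{\simeq}{\longrightarrow}x$ of the underlying blocks. Setting $\varphi|_x:=\beta_x^{-1}:x\stackrel{\simeq}{\longrightarrow}\sigma(x)$ and assembling over $x\in X$ yields the desired partition-preserving bijection $\varphi:\bigsqcup X\stackrel{\simeq}{\longrightarrow}\bigsqcup Y$.

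I expect the main obstacle to be the bookkeeping in the intrinsic characterization: one must verify both that the minimal elements are exactly the bijective pairs and that two minimal elements generate the same subfunctor if and only if they carry the same label, so that the correspondence $X\leftrightarrow\{\text{summands}\}$ is genuinely canonical — in particular unaffected by blocks of equal cardinality, which yield abstractly isomorphic summands but remain distinct as subfunctors of $F_X$. The point that $\sigma$ is a bijection, rather than merely a well-defined map, is most cleanly obtained by running the same construction for $\widetilde{\varphi}^{-1}$ and checking via naturality that the two assignments are mutually inverse.
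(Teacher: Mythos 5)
Your proof is correct and takes essentially the same route as the paper's: both single out the canonical generator $(x,\id_x)$ of each summand, use naturality of $\widetilde{\varphi}$ to show the whole summand $\kasen(K_x,\bullet)$ is carried into a single summand $\kasen(K_{y},\bullet)$, and then invoke Lemma \ref{lem:yoneda}. Your intrinsic ``minimality'' characterization and the explicit verification that the induced map $\sigma:X\to Y$ is a bijection are extra scaffolding that the paper leaves implicit, but the underlying argument is the same.
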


\begin{proof}
Let $x_0\in X$. Consider the identity map $\id_{x_0}:x_0\longrightarrow x_0$ as a coloring $\id_{x_0}\in\kasen(K_{x_0}, x_0)$. Then there exists a unique $y_0\in Y$ such that $\widetilde{\varphi}(\id_{x_0})\in\kasen(K_{y_0}, x_0)$. 

Let $c\in\kasen(K_{x_0}, Z)$ be an arbitrary coloring of $K_{x_0}$. Since $c:x_0\hookrightarrow Z$ is an injection, we may consider the following commutative diagram. 
\[
\begin{CD}
\kasen(K_{x_0}, x_0)@>\widetilde{\varphi}_{x_0}>> 
\bigsqcup\limits_{y\in Y}\kasen(K_y, x_0)\\
@Vc_*VV @VVc_*V\\
\kasen(K_{x_0}, Z)@>\widetilde{\varphi}_{Z}>> 
\bigsqcup\limits_{y\in Y}\kasen(K_y, Z)
\end{CD}
\]
By the commutativity of the diagram, we have $\widetilde{\varphi}_Z(c)=c_*(\widetilde{\varphi}(\id_{x_0}))$, which is contained in $\kasen(K_{y_0}, Z)$. Hence, $\widetilde{\varphi}$ induces a homomorphism $\widetilde{\varphi}: \kasen(K_{x_0}, \bullet)\longrightarrow \kasen(K_{y_0}, \bullet)$. It is an isomorphism, hence by Lemma \ref{lem:yoneda}, induced from a bijection $x_0\stackrel{\simeq}{\longrightarrow} y_0$. This enables us to conclude that the isomorphism $\widetilde{\varphi}$ is induced from the isomorphism of partitions $X\stackrel{\simeq}{\longrightarrow} Y$. 
\end{proof}

\section{Chromatic theory for infinite graphs}

\label{sec:infinitegraphs}

\subsection{Relative Cantor-Bernstein-Schr\"oder theorem}

\label{sec:relCBS}

In this section, we prove the following relative version of the Cantor-Bernstein-Schr\"oder (CBS) theorem. 

\begin{theorem}
\label{thm:relCBS}
Let 
$f:X_1\longrightarrow X_2$ and $g:Y_1\longrightarrow Y_2$ be maps of sets. Let $i_1, i_2, j_1$, and $j_2$ be injections that make the following diagram commutative. 
\begin{equation}
\label{eq:commdiags}
\begin{CD}
X_1 @>i_1>> Y_1\\
@VfVV @VVgV\\
X_2 @>i_2>> Y_2
\end{CD}
\hspace{2cm}
\begin{CD}
X_1 @<j_1<< Y_1\\
@VfVV @VVgV\\
X_2 @<j_2<< Y_2
\end{CD}
\end{equation}
Assume that for any subset $Z\subset X_2$ and $W\subset Y_2$, it holds that 
\begin{equation}
\label{eq:pullback}
g^{-1}(i_2(Z))=i_1(f^{-1}(Z)), \ \ 
f^{-1}(j_2(W))=j_1(g^{-1}(W)). 
\end{equation}
Then there exist bijections $r_\alpha:X_\alpha\stackrel{\simeq}{\longrightarrow}Y_\alpha$ ($\alpha=1, 2$) such that the following diagram is commutative. 
\begin{equation}
\label{eq:bijcomm}
\begin{CD}
X_1 @>r_1>\simeq> Y_1\\
@VfVV @VVgV\\
X_2 @>r_2>\simeq> Y_2
\end{CD}
\end{equation}
\end{theorem}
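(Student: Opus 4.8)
The plan is to run the classical Cantor--Bernstein--Schr\"oder ``chain decomposition'' argument simultaneously on the top row $(X_1, Y_1)$, equipped with the injections $i_1\colon X_1\hookrightarrow Y_1$ and $j_1\colon Y_1\hookrightarrow X_1$, and on the bottom row $(X_2, Y_2)$, equipped with $i_2, j_2$; the two decompositions will be forced to agree along $f$ and $g$ by the pullback hypotheses $(\ref{eq:pullback})$. First I would reinterpret the hypotheses fiberwise. Applying $(\ref{eq:pullback})$ to a singleton $Z=\{a\}$ shows that $i_1$ restricts to a bijection $f^{-1}(a)\stackrel{\simeq}{\to}g^{-1}(i_2(a))$ for every $a\in X_2$, and similarly $j_1$ restricts to a bijection $g^{-1}(b)\stackrel{\simeq}{\to}f^{-1}(j_2(b))$ for every $b\in Y_2$; taking instead $Z=X_2$ and $W=Y_2$ gives the two key \emph{image-detection} identities $i_1(X_1)=g^{-1}(i_2(X_2))$ and $j_1(Y_1)=f^{-1}(j_2(Y_2))$, so that membership of a point in the image of $i_1$ or $j_1$ is detected after projecting by $g$ or $f$.

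Recall the CBS construction for injections $i\colon A\hookrightarrow B$ and $j\colon B\hookrightarrow A$: for $a\in A$ one traces the backward chain $a,\ j^{-1}(a),\ i^{-1}(j^{-1}(a)),\dots$, which either terminates in $A\setminus j(B)$, terminates in $B\setminus i(A)$, or continues infinitely; setting $r(a)=i(a)$ in the first and third cases and $r(a)=j^{-1}(a)$ in the second yields a bijection $r\colon A\stackrel{\simeq}{\to}B$. I would apply this to the bottom row to obtain $r_2\colon X_2\stackrel{\simeq}{\to}Y_2$ and to the top row to obtain $r_1\colon X_1\stackrel{\simeq}{\to}Y_1$. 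The heart of the argument is the compatibility lemma: for every $p\in X_1$, the chain type of $p$ (with respect to $i_1, j_1$) equals the chain type of $f(p)$ (with respect to $i_2, j_2$). This follows by induction along the chain, using the commutative squares $(\ref{eq:commdiags})$ (which give $g\circ i_1=i_2\circ f$ and $f\circ j_1=j_2\circ g$, so that each backward step on top covers a backward step below) together with the image-detection identities (which guarantee that a backward step is possible on top precisely when it is possible below, so the two chains terminate at the same stage and are of the same type).

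Granting the lemma, commutativity of $(\ref{eq:bijcomm})$ is immediate by cases. If $p$ and $a=f(p)$ are of the terminating-in-$X$ or infinite type, then $r_1(p)=i_1(p)$ and $r_2(a)=i_2(a)$, whence $g(r_1(p))=g(i_1(p))=i_2(f(p))=r_2(f(p))$; if they are of the terminating-in-$Y$ type, then $r_1(p)=j_1^{-1}(p)$ and $r_2(a)=j_2^{-1}(a)$, and writing $p=j_1(q)$ one checks $g(q)=j_2^{-1}(f(p))$ from $f\circ j_1=j_2\circ g$, giving again $g(r_1(p))=r_2(f(p))$. I expect the only real obstacle to be the compatibility lemma, and within it the verification that image-membership is visible on the base; this is exactly the role played by the hypotheses $(\ref{eq:pullback})$, and once the two identities $i_1(X_1)=g^{-1}(i_2(X_2))$ and $j_1(Y_1)=f^{-1}(j_2(Y_2))$ are in hand, the remaining bookkeeping is routine.
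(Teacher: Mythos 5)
Your proof is correct and takes essentially the same approach as the paper: both run the classical CBS construction in parallel on the two rows and use the pullback hypotheses to show that the resulting decompositions correspond under $f$ and $g$. The paper packages your pointwise compatibility lemma as the induction $f^{-1}(C_2^n)=C_1^n$ for the sets $C_\alpha^0=X_\alpha\setminus j_\alpha(Y_\alpha)$, $C_\alpha^{n+1}=j_\alpha(i_\alpha(C_\alpha^n))$ (the only cosmetic difference being that the paper assigns $j_\alpha^{-1}$ rather than $i_\alpha$ on the doubly infinite chains, which is equally valid).
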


\begin{remark}
\label{rem:fiber}
The conditions (\ref{eq:pullback}) are satisfied if the diagrams (\ref{eq:commdiags}) are fiber products. 
\end{remark}

\begin{proof}
Define the subset $C_\alpha^n$ ($\alpha=1,2$ and $n\geq 0$) by 
\[
\left\{
\begin{array}{l}
C_\alpha^0:=X_\alpha\setminus j_\alpha(Y_\alpha), \\
\\
C_\alpha^{n+1}:=j_\alpha(i_\alpha(C_\alpha^n)), 
\end{array}
\right.
\]
and set $C_\alpha:=\bigcup_{n=0}^\infty C_\alpha^n$. Let us define the map $r_\alpha:X_\alpha\longrightarrow Y_\alpha$ ($\alpha=1, 2$) as follows. 
\[
r_\alpha(x)=
\left\{
\begin{array}{ll}
i_\alpha(x), &\mbox{ if }x\in C_\alpha, \\
&\\
j_\alpha^{-1}(x), &\mbox{ if }x\notin C_\alpha. 
\end{array}
\right.
\]
It is well known that $r_\alpha:X_\alpha\longrightarrow Y_\alpha$ is bijective. It remains for us to prove commutativity 
\begin{equation}
\label{eq:rfgr}
r_2\circ f=g\circ r_1. 
\end{equation}
First, let us prove 
\[
f^{-1}(C_2^n)=C_1^n
\]
by induction on $n$. In the case $n=0$, since $X_2=C_2^0\sqcup j_2(Y_2)$, we have 
\[
\begin{split}
X_1=f^{-1}(X_2)&=
f^{-1}(C_2^0)\sqcup f^{-1}(j_2(Y_2))\\
&=
f^{-1}(C_2^0)\sqcup j_1(g^{-1}(Y_2))\\
&=
f^{-1}(C_2^0)\sqcup j_1(Y_1). 
\end{split}
\]
Hence, $f^{-1}(C_2^0)=C_1^0$. Furthermore, 
\[
\begin{split}
f^{-1}(C_2^{n+1})
&=
f^{-1}(j_2(i_2(C_2^{n})))\\
&=
j_1(g^{-1}(i_2(C_2^{n})))\\
&=
j_1(i_1(f^{-1}(C_2^{n})))\\
&=
j_1(i_1(C_1^{n}))\\
&=
C_1^{n+1}. 
\end{split}
\]
Thus, we obtain $f^{-1}(C_2)=C_1$. It follows the commutativity relation (\ref{eq:rfgr}) from that of (\ref{eq:commdiags}). 
\end{proof}

\subsection{CBS property for chromatic functors}

\label{sec:cbs}

In this section, we prove that chromatic functors satisfy the CBS property. 

\begin{theorem}
\label{thm:cbs}
Let $G_1$ and $G_2$ be (possibly infinite) graphs. Suppose that there exist surjective graph homomorphisms $
\varphi:G_1\twoheadrightarrow G_2, \mbox{ and }
\psi:G_2\twoheadrightarrow G_1$. 
Then $G_1$ and $G_2$ are chromatically equivalent. 
\end{theorem}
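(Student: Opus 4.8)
The plan is to deduce Theorem~\ref{thm:cbs} from the relative Cantor--Bernstein--Schr\"oder theorem (Theorem~\ref{thm:relCBS}) applied levelwise in the coloring set. A surjective graph homomorphism $\varphi:G_1\twoheadrightarrow G_2$ induces, by contravariant functoriality of $\kasen$ together with the fact that surjective homomorphisms land in $\Set^{\inj}$, an injection $\varphi^*_X:\kasen(G_2,X)\hookrightarrow\kasen(G_1,X)$ for each set $X$, and dually $\psi$ induces $\psi^*_X:\kasen(G_1,X)\hookrightarrow\kasen(G_2,X)$. So for each fixed $X$ we have injections in both directions between $\kasen(G_1,X)$ and $\kasen(G_2,X)$. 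The issue is that ordinary CBS would produce a bijection for each $X$ with no control over naturality in $X$; this is exactly the defect that Theorem~\ref{thm:relCBS} is designed to repair.

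The key step is to set up the data of Theorem~\ref{thm:relCBS} using a single transition map in the color variable. Concretely, fix an injection of color sets $\iota:X\hookrightarrow X'$ (a morphism in $\Set^{\inj}$) and let $f=\iota_*:\kasen(G_1,X)\to\kasen(G_1,X')$ and $g=\iota_*:\kasen(G_2,X)\to\kasen(G_2,X')$ be the induced maps. Take the horizontal injections in~(\ref{eq:commdiags}) to be the two directions of the CBS data, namely $i_\alpha$ built from $\varphi^*$ (respectively $\psi^*$) at the color sets $X$ and $X'$, and $j_\alpha$ built from the other of the two homomorphisms. The squares commute because $\kasen$ is a \emph{bi}-functor, so the graph-direction maps $\varphi^*,\psi^*$ commute with the color-direction maps $\iota_*$. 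The crucial hypothesis~(\ref{eq:pullback}) must then be verified: I would check that for a coloring $c\in\kasen(G_1,X')$, the condition that $c$ factors through the subset $X\subset X'$ (i.e.\ lies in the image of $\iota_*$) is detected compatibly under $\varphi^*$, because $\varphi^*$ precomposes a coloring of $G_2$ with $\varphi$ and hence does not change the set of colors actually used. This is precisely the fiber-product condition flagged in Remark~\ref{rem:fiber}: the square
\[
\begin{CD}
\kasen(G_1,X) @>\varphi^*\text{ or }\psi^*>> \kasen(G_2,X)\\
@V\iota_*VV @VV\iota_*V\\
\kasen(G_1,X') @>\varphi^*\text{ or }\psi^*>> \kasen(G_2,X')
\end{CD}
\]
is a pullback, since a coloring of $G_1$ (resp.\ $G_2$) extending to colors in $X'$ uses only colors in $X$ iff its composite with $\varphi$ (resp.\ $\psi$) does.

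Once~(\ref{eq:pullback}) is confirmed, Theorem~\ref{thm:relCBS} furnishes bijections $r_X:\kasen(G_1,X)\stackrel{\simeq}{\to}\kasen(G_2,X)$ and $r_{X'}:\kasen(G_1,X')\stackrel{\simeq}{\to}\kasen(G_2,X')$ commuting with $\iota_*$, which is exactly the naturality square~(\ref{eq:bijcomm}) for the morphism $\iota$. To assemble these into a single natural isomorphism $\kasen(G_1,\maru)\simeq\kasen(G_2,\maru)$ I would run the CBS construction functorially: the subsets $C_\alpha^n$ and hence $C_\alpha=\bigcup_n C_\alpha^n$ are defined purely from the injections $i_\alpha,j_\alpha$, and the identity $f^{-1}(C_2)=C_1$ proved inside Theorem~\ref{thm:relCBS} shows that these subsets are compatible under the color transition maps. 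Thus the same formula $r_X(c)=\varphi^*_X(c)$ or $\psi^{*-1}_X(c)$ according as $c\in C_X$ defines $r_X$ simultaneously for all $X$, and naturality holds against every injection $\iota$ because Theorem~\ref{thm:relCBS} gives~(\ref{eq:bijcomm}) for each $\iota$ using these same compatible $C$-sets.

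The main obstacle I anticipate is not the CBS machinery, which is handed to us, but verifying the pullback condition~(\ref{eq:pullback}) cleanly and, more delicately, checking that the $C$-sets are genuinely natural in $X$ rather than merely yielding a bijection for each fixed $X$. The subtle point is that $C^0_\alpha=\kasen(G_\alpha,X)\setminus j_\alpha(\cdots)$ depends on the image of an injection built from a graph homomorphism, and one must argue that a coloring lies outside this image compatibly as $X$ grows; this follows from~(\ref{eq:pullback}) but deserves careful bookkeeping, since it is the single place where naturality (as opposed to a mere bijection of sets) is earned.
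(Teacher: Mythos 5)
Your proposal is correct and follows essentially the same route as the paper: both apply Theorem~\ref{thm:relCBS} to the squares formed by $\varphi^*,\psi^*$ and the color-transition maps $\iota_*$, verify the fiber-product condition via surjectivity of the graph homomorphisms, and assemble the resulting bijections into a natural isomorphism. Your added observation that the sets $C_\alpha$ and hence $r_X$ depend only on $\varphi^*_X,\psi^*_X$ (not on the chosen $\iota$), which is what makes the family $\{r_X\}$ genuinely natural, is exactly the point the paper leaves implicit in the sentence ``the system of bijections $\{r_X\}$ determines an isomorphism of functors.''
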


\begin{proof}
Let $f:X\hookrightarrow Y$ be an injection of sets. Then we have the following commutative diagrams. 
\begin{equation}
\label{eq:chrcommdiags}
\begin{CD}
\kasen(G_1, X) @>\psi^*>> \kasen(G_2, X)\\
@Vf_*VV @VVf_*V\\
\kasen(G_1, Y) @>\psi^*>> \kasen(G_2, Y)
\end{CD}
\hspace{1cm}
\begin{CD}
\kasen(G_1, X) @<\varphi^*<< \kasen(G_2, X)\\
@Vf_*VV @VVf_*V\\
\kasen(G_1, Y) @<\varphi^*<< \kasen(G_2, Y)
\end{CD}
\end{equation}
Note that all maps in (\ref{eq:chrcommdiags}) are injective. It is easy to see that both diagrams are fiber products. Applying Theorem \ref{thm:relCBS} (see also Remark \ref{rem:fiber}), there exist bijections $r_X$ and $r_Y$ such that the following diagram is commutative. 
\begin{equation}
\label{eq:chrcommbij}
\begin{CD}
\kasen(G_1, X) @>r_X>\simeq> \kasen(G_2, X)\\
@Vf_*VV @VVf_*V\\
\kasen(G_1, Y) @>r_Y>\simeq> \kasen(G_2, Y)
\end{CD}
\end{equation}
The system of bijections $\{r_X\}$ determines an isomorphism of functors 
\[
r_{\maru}:\kasen(G_1, \maru)\stackrel{\simeq}{\longrightarrow}
\kasen(G_2, \maru). 
\]
This completes the proof. 
\end{proof}

\begin{remark}
\label{rem:hasebe1}
We can also prove the previous result using stable partitions. Namely, a graph surjective homomorphism $G_1\twoheadrightarrow G_2$ induces an injection $\St_\alpha(G_1)\hookleftarrow\St_\alpha(G_2)$ for any cardinality $\alpha$. Similarly, $G_1\twoheadleftarrow G_2$ induces an injection $\St_\alpha(G_1)\hookrightarrow\St_\alpha(G_2)$. Then the CBS Theorem concludes that there exists a bijection $\St_\alpha(G_1)\stackrel{\simeq}{\longrightarrow}\St_\alpha(G_2)$ for any cardinality $\alpha$. Then it follows from Proposition \ref{prop:stablepart} that $G_1$ and $G_2$ are chromatically equivalent. 
\end{remark}

\begin{definition}
\label{def:naturaltree}
The \emph{natural tree} $T_{\bN}=(V, E)$ (Figure \ref{fig:naturaltree}) is defined by 
\begin{itemize}
\item $V=\bN=\{0, 1, 2, \dots, n, \dots\}$, and 
\item $E=\{\{n, n+1\}\mid n\in\bN\}$. 
\end{itemize}
\begin{figure}[htbp]
\centering

\begin{picture}(300,20)(0,0)
\thicklines

\put(0,0){\line(1,0){280}}
\multiput(0,0)(50,0){6}{\circle*{4}}
\multiput(290,0)(10,0){3}{\circle*{2}}
\put(-3,4){$0$}
\put(47,4){$1$}
\put(97,4){$2$}
\put(147,4){$3$}
\put(197,4){$4$}
\put(247,4){$5$}

\end{picture}

\caption{The natural tree $T_{\bN}$}
\label{fig:naturaltree}
\end{figure}
\end{definition}

\begin{proposition}
\label{prop:cover}
Let $G=(V, E)$ be a connected countable graph. Then there exists a surjective graph homomorphism 
\[
\varphi:T_{\bN}\twoheadrightarrow G. 
\]
\end{proposition}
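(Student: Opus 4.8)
The plan is to reinterpret a graph homomorphism $\varphi : T_{\bN} \longrightarrow G$ concretely. Since the edge set of $T_{\bN}$ consists precisely of the consecutive pairs $\{n, n+1\}$, giving $\varphi$ amounts to choosing a sequence of vertices $v_n := \varphi(n) \in V$ such that $\{v_n, v_{n+1}\} \in E$ for every $n \in \bN$; that is, an \emph{infinite walk} $v_0, v_1, v_2, \dots$ in $G$. Surjectivity of $\varphi$ is exactly the requirement that this walk visit every vertex of $G$. Thus the proposition is equivalent to the purely combinatorial statement that every connected countable graph (with at least one edge) admits an infinite walk passing through all of its vertices. Note that if $\#V = 1$ then $E = \emptyset$ and no homomorphism exists; since $G$ is connected we may therefore assume $\#V \geq 2$, so that $E \neq \emptyset$.

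To construct such a walk, first I would fix an enumeration of the vertices. As $V$ is countable, write $V = \{w_0, w_1, w_2, \dots\}$, repeating entries if $V$ is finite so that the list is infinite and every vertex occurs in it. Because $G$ is connected, for each $i \geq 0$ there is a finite walk $P_i$ in $G$ from $w_i$ to $w_{i+1}$. Concatenating $P_0, P_1, P_2, \dots$ --- identifying the terminal vertex $w_{i+1}$ of $P_i$ with the initial vertex of $P_{i+1}$ --- produces a single infinite sequence of vertices in which consecutive terms are adjacent in $G$. Reading this sequence off as $v_0, v_1, v_2, \dots$ and setting $\varphi(n) := v_n$ defines a graph homomorphism $\varphi : T_{\bN} \longrightarrow G$, since each step of the concatenation, including those across the junctions $w_{i+1}$, is an edge of $G$.

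It then remains to check surjectivity, which is immediate: every $w_i$ appears as an endpoint of $P_{i-1}$ (or $P_i$), hence occurs in the concatenated walk, so $\varphi$ hits every vertex of $G$. Since the enumeration exhausts $V$, the homomorphism $\varphi$ is surjective.

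I do not expect a serious obstacle here; the argument rests only on connectivity (to obtain the finite walks $P_i$) and countability (to enumerate $V$). The one point requiring a little care is guaranteeing that the resulting walk is genuinely infinite: when $V$ is finite the naive concatenation terminates, which is why I pad the enumeration so that each vertex recurs infinitely often. Equivalently, one could append an endless back-and-forth walk $a, b, a, b, \dots$ along any fixed edge $\{a,b\} \in E$ once all vertices have been visited; with the assumption $E \neq \emptyset$ in place this extension is always available, completing the construction.
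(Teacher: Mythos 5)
Your proof is correct and takes essentially the same approach as the paper, which likewise builds an infinite walk covering all vertices (via countability and connectedness) and reads it off as a homomorphism from $T_{\bN}$; you merely spell out the concatenation of connecting walks that the paper leaves implicit. Your observation about the one-vertex graph (where no homomorphism from $T_{\bN}$ exists since simple graphs have no loops) is a legitimate degenerate case the paper silently ignores, though it is harmless for the proposition's only application, to unbounded graphs.
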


\begin{proof}
Since $V$ is countable and $G$ is connected, there exists a sequence $\{v_{i_0}, v_{i_1}, \dots, \}$ of vertices such that $\{v_{i_s}, v_{i_{s+1}}\}\in E$ and $\{v_{i_s}\}_{s\in\bN}$ covers $V$. Then $\bN\ni s\longmapsto v_{i_s}\in V$ induces a surjective graph homomorphism $\varphi:T_{\bN}\twoheadrightarrow G$. 
\end{proof}

Let $G=(V, E)$ be a connected graph. Then the set $V$ of vertices admits the adjacency metric, denoted by $d_G(\maru, \maru)$. The diameter of $G$ is defined by 
\[
\sup_{x, y\in V}d_G(x, y). 
\]
The graph $G$ is said to be unbounded if the diameter is $\infty$. 

\begin{theorem}
\label{thm:unbdd}
Let $G=(V, E)$ be a connected countable unbounded graph that has no cycle of odd length (in other words, $\chi(G)=2$). Then $G$ is chromatically equivalent to the natural tree $T_\bN$. 
\end{theorem}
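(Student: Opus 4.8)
The plan is to apply the Cantor--Bernstein--Schr\"oder property (Theorem \ref{thm:cbs}), which reduces the claim to exhibiting surjective graph homomorphisms in \emph{both} directions between $G$ and $T_\bN$. One direction comes for free: since $G$ is connected and countable, Proposition \ref{prop:cover} already supplies a surjection $T_\bN\twoheadrightarrow G$. The substance of the proof is therefore to construct a surjective homomorphism $G\twoheadrightarrow T_\bN$ in the opposite direction.

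For this I would fix a base vertex $v_0\in V$ and define $\varphi:V\longrightarrow\bN$ by $\varphi(v)=d_G(v_0,v)$, the adjacency distance from $v_0$. First I would check that $\varphi$ is a graph homomorphism, i.e. that every edge $\{u,v\}\in E$ satisfies $|\varphi(u)-\varphi(v)|=1$. The triangle inequality gives $|d_G(v_0,u)-d_G(v_0,v)|\leq 1$, so it only remains to rule out equality. This is exactly where the hypothesis $\chi(G)=2$ enters: because $G$ has no cycle of odd length, all walks between two fixed vertices have the same length parity, so $d_G(v_0,u)$ and $d_G(v_0,v)$ have \emph{opposite} parities whenever $u$ and $v$ are adjacent. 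Hence $\varphi(u)\neq\varphi(v)$, which forces $|\varphi(u)-\varphi(v)|=1$, and $\varphi$ sends edges of $G$ to edges of $T_\bN$.

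Next I would verify surjectivity of $\varphi$. Since $G$ is connected, the image of $\varphi$ is an initial segment of $\bN$: any shortest path $v_0,v_1,\dots,v_k$ realizes all the distances $0,1,\dots,k$. It then suffices to see that the eccentricity $\sup_{v\in V}d_G(v_0,v)$ is infinite. This follows from unboundedness: if the eccentricity of $v_0$ were some finite $M$, then by the triangle inequality every pair of vertices would be at distance at most $2M$, contradicting the infinite diameter. Thus the image is an unbounded initial segment of $\bN$, namely all of $\bN$, and $\varphi:G\twoheadrightarrow T_\bN$ is surjective.

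With the two surjections $T_\bN\twoheadrightarrow G$ and $G\twoheadrightarrow T_\bN$ in hand, Theorem \ref{thm:cbs} immediately yields that $G$ and $T_\bN$ are chromatically equivalent. The main point---and the only step that genuinely uses all three hypotheses---is the construction of the distance homomorphism $\varphi$: connectedness makes $d_G$ well defined and forces its image to be an interval, the odd-cycle-free condition upgrades the bound $|d_G(v_0,u)-d_G(v_0,v)|\leq 1$ to the equality required for a homomorphism into $T_\bN$, and unboundedness delivers surjectivity. I do not anticipate any serious obstacle beyond carrying out these three checks carefully.
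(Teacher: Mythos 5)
Your proposal is correct and follows exactly the paper's own argument: the distance function $d_G(p,\bullet)$ gives a surjective homomorphism $G\twoheadrightarrow T_\bN$ (using the no-odd-cycle hypothesis for the homomorphism property and unboundedness for surjectivity), Proposition \ref{prop:cover} gives the reverse surjection, and Theorem \ref{thm:cbs} concludes. You merely spell out the parity and eccentricity details that the paper leaves implicit.
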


\begin{proof}
Let $p\in V$. Then by the assumption that there are no cycles of odd length, the map $d_G(p, \bullet): V\longrightarrow \bN$ induces a graph homomorphism 
$d_G(p, \bullet): G\longrightarrow T_\bN$. The unboundedness of $G$ implies that the homomorphism $d_G(p, \bullet): G\longrightarrow T_\bN$ is surjective. Proposition \ref{prop:cover} and CBS-type result (Theorem \ref{thm:cbs}) conclude that $G$ and $T_\bN$ are chromatically equivalent. 
\end{proof}

\begin{corollary}
\label{cor:unbddtree}
Connected countable unbounded trees are chromatically equivalent to $T_\bN$. 
\end{corollary}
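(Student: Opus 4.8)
The plan is to recognize that this corollary is an immediate specialization of Theorem \ref{thm:unbdd}, so the entire task reduces to checking that a connected countable unbounded tree satisfies every hypothesis of that theorem. First I would recall the relevant hypotheses of Theorem \ref{thm:unbdd}: the graph must be connected, countable, unbounded, and have no cycle of odd length (equivalently $\chi(G)=2$). The first three conditions are given verbatim in the statement of the corollary, so the only point requiring a remark is the odd-cycle condition.

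Next I would observe that a tree, by definition, is a connected acyclic graph, so in particular it contains no cycle of any length, and \emph{a fortiori} no cycle of odd length. Consequently the hypothesis ``$G$ has no cycle of odd length'' is satisfied trivially. I would also note for completeness that such a tree has $\chi(G)=2$: since it contains at least one edge (an unbounded graph cannot be a single vertex) it is not $1$-colorable, while its bipartiteness, which follows from acyclicity, furnishes a regular $2$-coloring.

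With all four hypotheses verified, I would simply invoke Theorem \ref{thm:unbdd} to conclude that any connected countable unbounded tree $T$ is chromatically equivalent to the natural tree $T_\bN$, completing the proof. I do not anticipate any genuine obstacle here, since the argument is purely a matter of matching definitions; the substantive content — the construction of the homomorphism $d_G(p,\bullet):G\longrightarrow T_\bN$ together with the surjection of Proposition \ref{prop:cover} and the CBS-type result Theorem \ref{thm:cbs} — has already been carried out in the proof of Theorem \ref{thm:unbdd}. The only item worth stating explicitly is the trivial verification that acyclicity implies the absence of odd cycles, which is what allows trees to fall under the scope of the theorem.
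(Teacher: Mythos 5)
Your proposal is correct and matches the paper's intent exactly: the paper states this as an immediate corollary of Theorem \ref{thm:unbdd} without further argument, and your verification that acyclicity implies the absence of odd cycles is precisely the (trivial) missing step.
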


\subsection{Infinite graphs with finite chromatic numbers}

\label{sec:finchrnum}

\begin{definition}
\label{def:naturalwheel}
The \emph{natural wheel} $W_{\bN}=(V, E)$ (Figure \ref{fig:naturalwheel}) is defined by 
\begin{itemize}
\item $V=\bN=\{0, 1, 2, \dots, n, \dots\}$, and 
\item $E=\{\{0, n\}\mid n>0\}$. 
\end{itemize}
\begin{figure}[htbp]
\centering

\begin{picture}(300,60)(0,-10)
\thicklines

\put(0,0){\line(1,0){50}}
\multiput(0,0)(50,0){6}{\circle*{4}}
\multiput(270,0)(10,0){3}{\circle*{2}}
\put(-3,-12){$0$}
\put(47,-12){$1$}
\put(97,-12){$2$}
\put(147,-12){$3$}
\put(197,-12){$4$}
\put(247,-12){$5$}

\qbezier(0,0)(50,20)(100,0)
\qbezier(0,0)(75,50)(150,0)
\qbezier(0,0)(100,85)(200,0)
\qbezier(0,0)(125,120)(250,0)

\end{picture}

\caption{The natural wheel $W_{\bN}$}
\label{fig:naturalwheel}
\end{figure}
\end{definition}
The graph $W_\bN$ is a countable connected tree with diameter $2$. Since $W_\bN$ has a finite diameter, we cannot use Corollary \ref{cor:unbddtree}. However, in this section, we will prove that $W_\bN$ is also chromatically equivalent to $T_\bN$.  

Let $G=(V, E)$ be a countable infinite graph (we do not make any assumptions about diameter and connectivity). We also assume that the chromatic number $\chi(G)$ is finite. For instance, the graph $W_n$ satisfies these conditions ($\chi(W_n)=2$). Then we prove that the isomorphism class of the chromatic functor $\kasen(G, \maru)$ is determined by the cardinality $\#\kasen(G, [\chi(G)])$. 

\begin{theorem}
\label{thm:finchrnum}
Let $G_1=(V_1, E_1)$ and $G_2=(V_2, E_2)$ be countable infinite graphs with the same finite chromatic number $\chi(G_1)=\chi(G_2)=n$. Suppose that 
\[
\#\kasen(G_1, [n])=
\#\kasen(G_2, [n]), 
\]
(possibly infinite cardinality). Then $G_1$ and $G_2$ are chromatically equivalent. 
\end{theorem}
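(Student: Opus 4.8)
The plan is to reduce the statement, exactly as in the finite case, to a comparison of the cardinals $\#\St_k(G_i)$ for each number of parts $k$. By Proposition \ref{prop:stablepart}, grouping stable partitions according to the number of their parts gives the infinite analogue of $(\ref{eq:findecomp})$,
\[
\kasen(G_i, \maru)\simeq
\bigsqcup_{k}\kasen(K_k, \maru)^{\sqcup\#\St_k(G_i)},
\]
where, since $V_i$ is countable, $k$ ranges over $\{1,2,\dots\}\cup\{\aleph_0\}$. The matching data $\#\St_k(G_1)=\#\St_k(G_2)$ for all $k$ assembles, as in the construction preceding Proposition \ref{prop:uniqueness}, into a partition-preserving bijection $\bigsqcup\St(G_1)\stackrel{\simeq}{\to}\bigsqcup\St(G_2)$, hence into an isomorphism of functors. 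So it suffices to prove $\#\St_k(G_1)=\#\St_k(G_2)$ for every $k$.

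I would first dispose of the small values $k\le n$. Since $\chi(G_i)=n$, a stable partition with fewer than $n$ parts would yield a proper coloring with fewer than $n$ colors, so $\St_k(G_i)=\emptyset$ for $k<n$. Evaluating the decomposition above at $X=[n]$ and using that $\#\kasen(K_k,[n])=\#\inj([k],[n])$ vanishes for $k>n$, only the term $k=n$ survives, giving
\[
\#\kasen(G_i, [n])=n!\cdot\#\St_n(G_i).
\]
As $n$ is finite, a short case analysis in cardinal arithmetic (dividing by $n!$ when $\#\St_n(G_i)$ is finite, and absorbing the finite factor $n!$ when it is infinite) shows that $\#\St_n(G_i)$ is recovered from $\#\kasen(G_i,[n])$; in particular the hypothesis forces $\#\St_n(G_1)=\#\St_n(G_2)$.

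The crux is the range $n<k\le\aleph_0$, where I claim $\#\St_k(G_i)=2^{\aleph_0}$ independently of the graph. The upper bound is immediate: every stable partition is a partition of the countable set $V_i$, and there are exactly $2^{\aleph_0}$ of those. For the lower bound, fix a stable partition $\{\pi_1,\dots,\pi_n\}$ with $n$ parts; because $V_i$ is infinite and $n$ is finite, some part, say $\pi_1$, is infinite. Any refinement of a stable partition is again stable, so refining $\pi_1$ into $k-n+1$ blocks (into $\aleph_0$ blocks when $k=\aleph_0$) while leaving $\pi_2,\dots,\pi_n$ untouched produces a stable partition with exactly $k$ parts, and there are $2^{\aleph_0}$ such refinements of a countably infinite set. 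Cantor--Bernstein--Schr\"oder for cardinals then yields $\#\St_k(G_i)=2^{\aleph_0}$.

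Combining the three ranges gives $\#\St_k(G_1)=\#\St_k(G_2)$ for all $k$, and the desired isomorphism follows from the displayed decomposition together with Proposition \ref{prop:uniqueness}. The main obstacle is the middle claim that all $\St_k$ with $k>n$ collapse to the continuum: this is exactly what makes $\chi(G)$ together with the single cardinal $\#\kasen(G,[n])$ a complete invariant, and it is special to the countable-infinite setting, where for finite graphs the higher $\St_k$ instead carry genuine information, as in $(\ref{eq:chrpolydec})$.
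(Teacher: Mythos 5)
Your proposal is correct and follows essentially the same route as the paper: reduce via Proposition \ref{prop:stablepart} to matching the cardinals $\#\St_k(G_i)$, observe that $\St_k=\emptyset$ for $k<n$ and $\#\kasen(G_i,[n])$ determines $\#\St_n(G_i)$, and show $\#\St_k(G_i)=2^{\aleph_0}$ for $n<k\le\aleph_0$ by splitting an infinite color class (your refinement of an infinite block $\pi_1$ is exactly the paper's construction of $\widetilde{c}$ from a subset $X\subset c^{-1}(i_0)$). You are slightly more explicit than the paper about the upper bound $\#\St_k\le 2^{\aleph_0}$ and about handling general $k>n+1$, which the paper dispatches with ``similarly.''
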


\begin{proof}
We shall use stable partitions of chromatic functors (Proposition \ref{prop:stablepart}). It is sufficient to show that 
\begin{equation}
\label{eq:coincidence}
\#\St_k(G_1)=
\#\St_k(G_2), 
\end{equation}
for all $k\leq\aleph_0$. Since 
\[
\#\kasen(G, [k])=\sum_{i=1}^k 
\begin{pmatrix}
k\\i
\end{pmatrix}
\cdot i!\cdot\#\St_i(G), 
\]
$\#\St_k(G_1)=\#\St_k(G_2)$ holds for $k\leq n$. Let $c\in\kasen(G_1, [n])$ be an element. Since $\#V_1=\aleph_0$, there exists $i_0\in [n]$ such that $\#c^{-1}(i_0)=\aleph_0$. For any subset $X\subset c^{-1}(i_0)$, let us define the map $\widetilde{c}:V\longrightarrow [n+1]$ by 
\[
\widetilde{c}(v)=
\left\{
\begin{array}{ll}
c(v)&\mbox{ if }c(v)\neq i_0\\
c(i_0)&\mbox{ if }c(v)= i_0 \mbox{ and }v\in X\\
n+1&\mbox{ if }c(v)= i_0 \mbox{ and }v\notin X. 
\end{array}
\right.
\]
Then $\widetilde{c}\in\kasen(G_1, [n+1])$. The map $\widetilde{c}$ changes according to the choice of the subset $X\subset c^{-1}(i_0)$. Hence, we have $\#\St_{n+1}(G_1)=2^{\aleph_0}$. Similarly, we have 
\[
\#\St_k(G_1)=\#\St_k(G_2)=2^{\aleph_0}
\]
for $n<k\leq\aleph_0$. This completes the proof. 
\end{proof}

\begin{remark}
\label{rem:hasebe2}
We can also prove Theorem \ref{thm:unbdd} using Theorem \ref{thm:finchrnum}. 
\end{remark}

\begin{example}
\label{ex:finchrnum}
Let $G_1, G_2$, and $G_3$ be graphs of Figure \ref{fig:G3}. Then they have the same chromatic number, $\chi(G_1)=\chi(G_2)=\chi(G_3)=3$. Note that $\#\kasen(G_1, [3])=2^{\aleph_0}$ and $\#\kasen(G_2, [3])=\#\kasen(G_3, [3])=6$. Hence, by Theorem \ref{thm:finchrnum}, $G_2$ and $G_3$ are chromatically equivalent, however, $G_1$ is not.

\begin{figure}[htbp]
\centering

\begin{picture}(200,150)(0,0)
\thicklines


\put(-25,135){$G_1$}
\put(0,130){\line(1,0){205}}
\multiput(220,130)(10,0){3}{\circle*{2}}
\multiput(0,130)(20,0){11}{\circle*{4}}
\multiput(20,150)(20,0){10}{\circle*{4}}
\multiput(0,130)(20,0){10}{\line(1,1){20}}
\multiput(20,130)(20,0){10}{\line(0,1){20}}
\qbezier(200,130)(202.5,132.5)(205,135)


\put(-25,95){$G_2$}
\put(0,90){\line(1,0){205}}
\multiput(220,90)(10,0){3}{\circle*{2}}
\multiput(0,90)(20,0){11}{\circle*{4}}
\multiput(20,110)(20,0){10}{\circle*{4}}
\multiput(0,90)(20,0){10}{\line(1,1){20}}
\multiput(20,90)(20,0){10}{\line(0,1){20}}
\qbezier(200,90)(202.5,92.5)(205,95)

\put(20,110){\line(1,0){185}}


\multiput(220,30)(10,0){3}{\circle*{2}}
\multiput(-20,30)(-10,0){3}{\circle*{2}}

\put(-25,0){$G_3$}
\multiput(0,0)(20,0){11}{\circle*{4}}
\multiput(0,20)(20,0){11}{\circle*{4}}
\multiput(0,40)(20,0){11}{\circle*{4}}
\multiput(0,60)(20,0){11}{\circle*{4}}

\multiput(0,-5)(20,0){11}{\line(0,1){70}}
\multiput(-5,0)(0,20){4}{\line(1,0){210}}

\put(-5,55){\line(1,1){10}}
\put(-5,35){\line(1,1){30}}
\put(-5,15){\line(1,1){50}}
\multiput(-5,-5)(20,0){8}{\line(1,1){70}}
\put(155,-5){\line(1,1){50}}
\put(175,-5){\line(1,1){30}}
\put(195,-5){\line(1,1){10}}

\end{picture}

\caption{$G_1, G_2$ and $G_3$}
\label{fig:G3}
\end{figure}

\end{example}

\subsection{Examples of non chromatically equivalent graphs}

\label{sec:example}

As presented in the previous section, the isomorphism class of the functor $\kasen(G, \maru)$ is determined by the chromatic number $\chi(G)$ and cardinality $\#\kasen(G, [\chi(G)])$, whenever $\chi(G)<\infty$. However, this is not the case for graphs with infinite chromatic number $\chi(G)$, two of which are presented here. 

\begin{example}
Let $X$ be an infinite set. Recall that $K_X=(V, E)$ is the complete graph with vertices $X$. Namely, $V=X$ and $E=\{e\in 2^X\mid \#e=2\}$. Let $e_0=\{x_0, x_1\}\in E$ be an edge, and define $E':=E\setminus\{e_0\}$. Consider the graph $K_X'=(V, E')$ obtained from $K_X$ by deleting an edge $e_0$. Then $K_X$ and $K_X'$ satisfy the following. 
\begin{itemize}
\item[(i)] $\chi(K_X)=\chi(K_X')=\#X$. 
\item[(ii)] $\#\kasen(K_X, X)=\#\kasen(K_X', X)=2^{\#X}$. 
\item[(iii)] $K_X$ and $K_X'$ are not chromatically equivalent. 
\end{itemize}
It is easily seen that (i) and (ii) hold. Let us prove (iii). First, consider the stable partition of $K_X'$. $\St(K_X')$ consists of two partitions, $\Pi_1:=\widetilde{X}:=\{\{x\}\mid x\in X\}$ and 
\[
\Pi_2:=\{\{x\}\mid x\in X, x\neq x_0, x_1\}\cup\{\{x_0, x_1\}\}. 
\]
Both have the same cardinality $\#\Pi_1=\#\Pi_2=\#X$. Therefore, the chromatic functor is isomorphic to 
\[
\kasen(K_X', \maru)\simeq 
\kasen(K_X, \maru)\sqcup 
\kasen(K_X, \maru). 
\]
However, by the uniqueness of stable partitions (Proposition \ref{prop:uniqueness}), it cannot be isomorphic to $\kasen(K_X, \maru)$. 
\end{example}

\medskip

\noindent
{\bf Acknowledgements.} The main part of this work was conducted during the author's stay at Hiroshima University in June 2015. The author thanks Professors Ichiro Shimada, Shun-ichi Kimura, Makoto Matsumoto, Nobuyoshi Takahashi, Akira Ishii, and Yuya Koda for many inspiring comments. In particular, the ideas of considering $\Set^{\inj}$ instead of $\Set^{\bij}$ and the distinction between bounded and unbounded trees were realized during our discussions. The author is also grateful to Professor Takahiro Hasebe for many suggestions (\S \ref{sec:uniqueness}, Remarks \ref{rem:hasebe1} and \ref{rem:hasebe2}). This work was partially supported by a Grant-in-Aid for Scientific Research (C) 25400060, JSPS.

\end{document}